\documentclass[11pt]{article}
\usepackage[margin=1in]{geometry}
\usepackage{amsmath,latexsym,amssymb,amsthm,hyperref}
\usepackage{comment}
\usepackage{mathtools}
\usepackage{enumerate}
\usepackage[pagewise]{lineno}
\usepackage{thmtools}
\usepackage{thm-restate}

\newtheorem{thm}{Theorem}[section]
\newtheorem{cor}[thm]{Corollary}

\newtheorem{conj}[thm]{Conjecture}
\theoremstyle{definition}
\newtheorem{defn}[thm]{Definition}
\newtheorem{ques}[thm]{Question}
\newtheorem*{remark}{Remark}
\newtheorem*{claim}{Claim}

\newcommand{\Qn}{Q^n}

\DeclarePairedDelimiter{\ceil}{\lceil}{\rceil}
\title{Stability for hyperplane covers}
\author{Shagnik Das\thanks{Department of Mathematics, National Taiwan University, Taiwan. E-mail: \texttt{shagnik@ntu.edu.tw}.} \and Valjakas Djaljapayan\thanks{Department of Mathematics, National Taiwan Normal University, Taiwan. E-mail:\texttt{81040002s@ntnu.edu.tw}} \and Yen-Chi Roger Lin\thanks{Department of Mathematics, National Taiwan Normal University, Taiwan. E-mail: \texttt{yclin@math.ntnu.edu.tw}} \and Wei-Hsuan Yu\thanks{Department of Mathematics, National Central University, Taiwan. E-mail: \texttt{whyu@math.ncu.edu.tw}}}

% MSC and Keywords command
\makeatletter
\newcommand{\subjclass}[2][1991]{%
  \let\@oldtitle\@title%
  \gdef\@title{\@oldtitle\footnotetext{#1 \emph{Mathematics subject classification.} #2}}%
}
\newcommand{\keywords}[1]{%
  \let\@@oldtitle\@title%
  \gdef\@title{\@@oldtitle\footnotetext{\emph{Key words and phrases.} #1.}}%
}
\makeatother

\subjclass[2020]{52C17, 05B40}

\keywords{Almost covers, hyperplanes, integer linear programming}

\begin{document}
\maketitle

%\linenumbers
\begin{abstract}
  An almost $k$-cover of the hypercube $\Qn = \{0,1\}^n$ is a collection of hyperplanes that avoids the origin and covers every other vertex at least $k$ times. When $k$ is large with respect to the dimension $n$, Clifton and Huang asymptotically determined the minimum possible size of an almost $k$-cover. Central to their proof was an extension of the LYM inequality, concerning a weighted count of hyperplanes.

  In this paper we completely characterise the hyperplanes of maximum weight, showing that there are $\binom{2n-1}{n}$ such planes. We further provide stability, bounding the weight of all hyperplanes that are not of maximum weight. These results allow us to effectively shrink the search space when using integer linear programming to construct small covers, and as a result we are able to determine the exact minimum size of an almost $k$-cover of $Q^6$ for most values of $k$. We further use the stability result to improve the Clifton--Huang lower bound for infinitely many choices of $k$ in every sufficiently large dimension $n$.
\end{abstract}

\section{Introduction}
\label{sec:intro}

While it is clear that the hypercube $\Qn = \{0,1\}^n$ can be covered by two hyperplanes, a classic and surprising result shows that if we have to avoid the origin, then $n$ planes are needed to cover the remaining points. Lying at the intersection of finite geometry and extremal combinatorics, this problem and its variations have been studied by several researchers over the decades. In the finite geometry setting, such a hyperplane cover is closely related to the notion of blocking sets, and research in this direction was pioneered in the late 1970s by Jamison~\cite{Jam77}. Meanwhile in the extremal context, this problem was first studied by Alon and F\"uredi~\cite{alon1993covering}, who resolved a problem of Komj\'ath~\cite{komjath1994partitions} from Ramsey Theory. The Alon--F\"uredi Theorem was a precursor to the hugely influential Combinatorial Nullstellensatz, and indeed, this problem has proven a valuable test case in the development of the algebraic method. For a more thorough survey of the history of this problem, we refer the reader to~\cite{BBDM23}.

\subsection{Covering with multiplicities}

In recent years, renewed interest in this hyperplane covering problem was sparked by the work of Clifton and Huang~\cite{clifton2020almost}. In this paper, they studied the multiplicity version of the problem. Given $k \in \mathbb{N}$, we define an \emph{almost $k$-cover} of $\Qn$ to be a set of hyperplanes that avoids the origin while covering all other points of $\Qn$ at least $k$ times. We are then interested in the minimum size of an almost $k$-cover, a quantity we denote by $f(n,k)$. Note that the Alon--F\"uredi Theorem shows $f(n,1) = n$.

Clifton and Huang showed that the extremal function $f(n,k)$ exhibits different behaviour, depending on the relative sizes of the two parameters. When $n$ is large compared to $k$, they used algebraic methods to obtain lower bounds on $f(n,k)$, showing that $f(n,2) = n+1$, $f(n,3) = n+3$ for all $n \ge 2$, and $f(n,k) \ge n + k + 1$ whenever $k \ge 4$ and $n \ge 3$. In a subsequent paper, Sauermann and Wigderson~\cite{SW22} solved the algebraic version of this problem, which in particular improves the lower bound to $f(n,k) \ge  n + 2k - 3$ for any $k \ge 2$ and $n \ge 2k-3$. However, these lower bounds fall short of Clifton and Huang's upper bound of $f(n,k) \le n + \binom{k}{2}$, which they conjecture to be the truth for all $k \in \mathbb{N}$ and $n$ sufficiently large with respect to $k$.

In this paper, however, we will be interested in the other regime, where $k$ is large with respect to $n$. In this range, Clifton and Huang~\cite{clifton2020almost} determined asymptotically the size of the smallest almost $k$-covers, showing $f(n,k) = \left(H_n + o(1) \right)k$, where $H_n = 1 + \frac12 + \frac13 + \cdots + \frac{1}{n}$ is the $n$th Harmonic number. To prove this result, they solved the linear programming relaxation of the integer linear program that represents the hyperplane covering problem.
In particular, to prove the lower bound, they first defined a weighting of the points of $\Qn$.

\begin{defn}
  Let $x$ be a nonzero point in $\Qn$ with exactly $t$ coordinates equal to $1$. The \emph{weight} of $x$ is defined as
  \[
    w(x) = \frac{1}{t {\binom{n}{t}}}.
  \]	
  More generally, given any set $S \subseteq \mathbb{R}^n$, we define the \emph{weight} of $S$ to be the sum of the weights of all points in $S \cap \Qn$; that is,
  \begin{equation}
  \label{eq:weight-at-most-1}
  w(S) = \sum_{p \in S \cap \Qn} w(p). \end{equation}
\end{defn}

Given this definition, the crucial step was following theorem.

\begin{thm}[\cite{clifton2020almost}, Theorem 1.3]
  \label{thm:CH}
  If $h$ is a hyperplane avoiding the origin, then $w(h) \le 1$.
\end{thm}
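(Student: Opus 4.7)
The plan is to interpret $w(h)$ probabilistically via random maximal chains, and then induct on $n$. Writing $\tfrac{1}{t\binom{n}{t}} = \tfrac{(t-1)!(n-t)!}{n!}$ and double-counting pairs $(\sigma,t)$ with $\sigma\in S_n$ and $\{\sigma(1),\dots,\sigma(t)\}$ equal to the support of some $p\in h$, one obtains
\[
  w(h) \;=\; \mathbb{E}_{\sigma \in S_n}\!\Bigl[\sum_{t \,:\, \{\sigma(1),\dots,\sigma(t)\} \in h}\tfrac{1}{t}\Bigr].
\]
So $w(h)\le 1$ is an LYM-type statement: on average, a uniformly random maximal chain intersects $h$ with harmonically-weighted multiplicity at most $1$. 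Unlike for antichains, a single chain may hit $h$ many times (the chain $\{1\}\subset\{1,2\}\subset\cdots\subset[n]$ hits $h\colon x_1=1$ at every level), so no per-chain bound is available --- the harmonic factors $1/t$ are essential for the average to collapse.

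For the induction, write $h\colon\sum_{i=1}^n a_ix_i=1$; the base case $n=1$ is immediate. I split on whether $a_n=0$. In the parallel case $a_n=0$, vertices of $h\cap\Qn$ pair up as $(q,0),(q,1)$ for $q\in h'\cap Q^{n-1}$, where $h'\colon\sum_{i<n}a_ix_i=1$ still avoids the origin, and the telescoping identity
\[
  \frac{1}{t\binom{n}{t}}+\frac{1}{(t+1)\binom{n}{t+1}} \;=\; \frac{1}{t\binom{n-1}{t}}
\]
collapses $w_n(h)=w_{n-1}(h')\le 1$ by the inductive hypothesis.

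The non-parallel case $a_n\ne 0$ is the main obstacle: $h\cap\Qn$ splits disjointly as $\{(q,0):q\in h_0\}$ with $h_0\colon\sum_{i<n}a_ix_i=1$ and $\{(q,1):q\in h_1\}$ with $h_1\colon\sum_{i<n}a_ix_i=1-a_n$. Now $h_0$ avoids the origin of $Q^{n-1}$ but $h_1$ may contain it (precisely when $a_n=1$, in which case $(0^{n-1},1)\in h$ contributes an extra $1/n$). My approach is to return to the chain picture and condition on the position $j=\sigma^{-1}(n)$ of the distinguished coordinate: for each $j$, the $\Qn$-chain projects to a $Q^{n-1}$-chain whose hits on $h$ below level $j$ correspond to hits of $h_0$ and whose hits at or above $j$ correspond to hits of $h_1$ (shifted by one). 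Taking expectation over $j\in[n]$ uniform should yield
\[
  w(h) \;=\; w_{n-1}(h_0) + \tfrac{1}{n}\bigl(\mathbb{E}\,|T_1|-\mathbb{E}\,|T_0|\bigr),
\]
where $T_i$ is the set of levels at which a uniformly random chain in $Q^{n-1}$ hits $h_i$ (with the convention that $0\in T_1$ if $0\in h_1$). Since $w_{n-1}(h_0)\le 1$ by induction, it remains to establish that the excess $\mathbb{E}\,|T_1|-\mathbb{E}\,|T_0|$ is bounded by $n\bigl(1-w_{n-1}(h_0)\bigr)$. This inequality is the crux, and is where the structure of $h$ as a single linear equation (so that $h_0,h_1$ are parallel hyperplanes in $Q^{n-1}$) must enter essentially, likely via an injection between chains hitting $h_1$ but missing $h_0$ and surplus LYM capacity in the inductive hypothesis.
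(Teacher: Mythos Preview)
Your probabilistic reformulation and the derivation of the identity
\[
  w_n(h) \;=\; w_{n-1}(h_0) \;+\; \tfrac{1}{n}\bigl(\mathbb{E}\,|T_1|-\mathbb{E}\,|T_0|\bigr)
\]
are both correct, and the $a_n=0$ case telescopes cleanly. But the proposal has a genuine gap: in the case $a_n\ne 0$ (which, after permuting coordinates, is \emph{every} nondegenerate case), your ``crux inequality'' $\mathbb{E}\,|T_1|-\mathbb{E}\,|T_0|\le n(1-w_{n-1}(h_0))$ is literally equivalent to the statement $w_n(h)\le 1$ you are trying to prove. The inductive hypothesis contributes only $w_{n-1}(h_0)\le 1$, i.e.\ that the right-hand side is nonnegative --- it says nothing about how $\mathbb{E}\,|T_1|$ and $\mathbb{E}\,|T_0|$ relate to the \emph{slack} $1-w_{n-1}(h_0)$. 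What you would need is a joint statement about the two parallel hyperplanes $h_0,h_1$ in $Q^{n-1}$, and no such statement is supplied by the single-hyperplane inductive hypothesis. The suggested ``injection between chains hitting $h_1$ but missing $h_0$ and surplus LYM capacity'' is not an argument; it is a hope, and there is no evident way to realise it.

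The paper's proof (following Clifton and Huang) avoids induction on $n$ altogether. The key idea --- which your plan is missing --- is a Cycle-Lemma/Raney-type argument: for each set $S$ covered by $h$ and each cyclic ordering $\sigma$ of $S$, one shows there exists at least one starting point of $\sigma$ at which all proper initial partial sums of the coefficients are strictly less than $1$. Fixing one such starting point per $(S,\sigma)$ gives an injection from pairs $(S,\sigma)$ (extended arbitrarily to full permutations) into $S_n$, and counting yields $\sum_{S\in h}(|S|-1)!\,(n-|S|)!\le n!$, i.e.\ $w(h)\le 1$. This is the substantive step your approach lacks; without something of comparable strength, the induction does not close.
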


To see how this implies the lower bound, observe that the total weight of the hypercube is $H_n$. Since every point is covered at least $k$ times, it follows that the total weight of the hyperplanes in an almost $k$-cover must be at least $H_n k$. Since, by Theorem~\ref{thm:CH}, each hyperplane can have weight at most $1$, we must have 
\begin{equation} \label{eqn:LPlowerbound}
f(n,k) \ge \ceil*{ H_n k }. 
\end{equation}

There are a couple of key remarks to be made concerning this theorem. First, Clifton and Huang observed that the bound is tight; any hyperplane $h = \{ x: \sum_{i=1}^n c_i x_i = 1 \}$ where all the coefficients $c_i$ are all either $0$ or $1$ satisfies $w(h) = 1$. In particular, this gives exponentially many hyperplanes of maximum weight.

Next, Clifton and Huang observed that if the coefficients $c_i$ are all positive, then the points covered by the hyperplane are the characteristic vectors of the sets in an antichain. The bound then follows immediately from the famous Lubell--Yamamoto--Meshalkin inequality~\cite{bollobas1965generalized, lubell1987short, meshalkin1963generalization, yamamoto1954logarithmic}. Thus, Theorem~\ref{thm:CH} can be viewed as a generalisation of the LYM inequality.

\subsection{Our results}

In this paper, we prove stability for Theorem~\ref{thm:CH}, characterising all hyperplanes of maximum weight, and improving the bound on the weight of all other hyperplanes.

\begin{thm}
  \label{thm:main}
  Let $h = \{ x : \sum_{i=1}^n c_i x_i = 1 \}$ be an affine hyperplane in $\mathbb{R}^n$ that does not pass through the origin. If the coefficients satisfy
  \begin{enumerate}[(i)]
    \item $c_i \leq 1$ for all $i$,
    \item $\sum_{i=1}^n c_i \geq 1$, and
    \item $c_i \in \mathbb{Z}$ for all $i$,
  \end{enumerate}
  then $w(h) = 1$. Otherwise, $w(h) \le 1 - \frac1n$.
\end{thm}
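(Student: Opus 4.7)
The plan is to treat the two cases of the dichotomy separately after a common dimensional reduction. Throughout I work with the probabilistic reformulation $w(h)=\mathbb{E}_\sigma\bigl[\sum_{j:\,S_j(\sigma)\in h}1/j\bigr]$, where $\sigma$ is a uniformly random permutation of $[n]$ and $S_j(\sigma)=\{\sigma(1),\ldots,\sigma(j)\}$, coming from $\tfrac{1}{t\binom{n}{t}}=\tfrac{(t-1)!(n-t)!}{n!}$. The key preparatory identity
$$\tfrac{1}{t\binom{n}{t}}+\tfrac{1}{(t+1)\binom{n}{t+1}}=\tfrac{1}{t\binom{n-1}{t}}$$
shows that whenever some $c_i=0$, we have $w_n(h)=w_{n-1}(h')$, where $h'$ is the restriction to the nonzero coordinates. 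Since this preserves all three conditions and, in the stability case, reduces to dimension $n^*\le n$ with $1-1/n^*\le 1-1/n$, we may assume $c_i\neq 0$ for every $i$; combined with (i) and (iii) this forces $c_i\in\{1,-1,-2,\ldots\}$, and then (ii) ensures at least one $+1$.

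For the equality direction, with $c_i\in\{1,-1,-2,\ldots\}$ and $\sum c_i\ge 1$, I would use the beta-integral representation
$$w(h)=\int_0^1\frac{F(y)}{y}\,dy, \qquad F(y)=[\theta^1]\prod_i\bigl((1-y)+y\theta^{c_i}\bigr),$$
derived from $\tfrac{1}{t\binom{n}{t}}=\int_0^1 y^{t-1}(1-y)^{n-t}\,dy$. Multiplying each negative factor by $\theta^{|c_i|}$ converts $F(y)=[\theta^{M+1}]Q(\theta,y)$, where $M=-\sum_{c_j<0}c_j$ and
$$Q(\theta,y)=\bigl((1-y)+y\theta\bigr)^p\prod_{c_j<0}\bigl((1-y)\theta^{|c_j|}+y\bigr)$$
is a polynomial in $\theta$ of degree $p+M$. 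Expanding $Q$ and extracting coefficients turns $\int F/y\,dy$ into a sum of beta integrals that, by a Chu--Vandermonde / Catalan-type convolution, telescopes to $1$ precisely under $p\ge M+1$, i.e.\ $\sum c_i\ge 1$; the baseline case $c=(1^p,(-1)^q)$ reduces directly to the identity $\sum_s C_s\binom{n-2s-1}{p-s-1}=\binom{n}{p}$, valid for $p\ge q+1$.

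For the stability direction, the tight example $c=(1,\ldots,1,-(n-1))$ achieves $w(h)=1-1/n$, so the bound is sharp. I plan to show a deficit of at least $1/n$ whenever any condition fails. If (i) or (iii) is violated, then at least one standard basis vector $e_i$ is absent from $h$, removing a level-$1$ contribution of $1/n$ from what the equality analysis would provide; combined with Theorem~\ref{thm:CH} this yields $w(h)\le 1-1/n$. If only (ii) fails (integer $c_i\le 1$ with $\sum c_i\le 0$), I would apply the cycle lemma of Dvoretzky--Motzkin to the cyclic arrangement $(c_{\sigma(1)},\ldots,c_{\sigma(n)})$: the drop $\sum c_i\le 0$ forces the partial-sum walk $p_j(\sigma)$ to miss level $1$ in a systematic way on average across cyclic shifts, producing the required $1/n$ gap. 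The main obstacle will be the equality step: the Clifton--Huang chain argument only gives $w(h)\le 1$ and does not automatically certify when equality occurs, so the Chu--Vandermonde identity (or an equivalent bijective argument) must be carried out in full for arbitrary negative coefficients, and this is where the bulk of the technical work lies.
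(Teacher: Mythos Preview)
Your plan diverges from the paper's proof in both halves, and one divergence is harmless while the other is a genuine gap.

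\textbf{Equality direction.} The paper's argument here is far lighter than yours. Working in the good/bad/redundant framework of Section~\ref{sec:GBR}, one simply observes that when all $c_i\in\mathbb{Z}$, $c_i\le 1$, and $\sum c_i\ge 1$, the partial sums $\sum_{j\le t}c_{\pi(j)}$ along any permutation $\pi$ are integers that increase by at most $1$ per step and eventually reach at least $1$; hence they hit $1$, and a two-line minimality argument shows the first hitting time is the unique starting point in its cyclic class. Thus every permutation is good and $w(h)=1$. Your beta-integral/Chu--Vandermonde route may well be made to work, but you have correctly identified it as ``the bulk of the technical work,'' whereas in the paper this step is almost free once the permutation framework is in place. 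The dimensional reduction via $\tfrac{1}{t\binom{n}{t}}+\tfrac{1}{(t+1)\binom{n}{t+1}}=\tfrac{1}{t\binom{n-1}{t}}$ is a nice observation, but it is not needed.

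\textbf{Stability when (i) or (iii) fails: here is the gap.} Your argument is: ``some $e_i\notin h$, so we lose a level-$1$ contribution of $1/n$, and combined with $w(h)\le 1$ we get $w(h)\le 1-1/n$.'' This implication is false as stated. Weight-$1$ hyperplanes themselves can miss basis vectors: for instance $c=(1,\ldots,1,-1,-1)$ with $n\ge 5$ satisfies (i)--(iii), has $w(h)=1$, yet $e_{n-1},e_n\notin h$. So ``$e_i\notin h$'' together with Theorem~\ref{thm:CH} does \emph{not} force $w(h)\le 1-1/n$; the missing singleton can be compensated by other sets. What actually produces the deficit is not the absence of $e_i$ from $h$ but the presence of many bad or redundant permutations. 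When $c_i>1$ this is immediate (every $\pi$ with $\pi(1)=i$ has first partial sum $>1$, hence is bad), and this much your argument can be repaired to say. But when (iii) fails with two or more non-integral coefficients, a permutation starting at a fractional coordinate need not be bad --- it may yield a set once enough fractional coefficients combine to an integer --- and the paper has to work harder: it shows that for every cyclic permutation of $[n]$ there is a rotation that is either bad or \emph{switchable} (its yielded set admits a second valid starting point), and then uses redundancy to collect the $1/n$ loss. Your proposal has no mechanism for this case.

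\textbf{Stability when (ii) fails.} Your cycle-lemma idea is essentially what the paper does: for each cyclic order of $[n]$, start just after the position where the running sum is maximised, and check that all partial sums stay below $1$. So that part of your plan is fine and matches the paper.
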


Note that the upper bound for the non-maximum-weight hyperplanes is best possible. Indeed, the hyperplane $h_{\alpha} = \{ x : \alpha x_1 + \sum_{i=2}^n x_i = 1 \}$ has $w(h_{\alpha}) = 1 - \frac1n$ whenever $\alpha \notin \{-(n-2), -(n-3), \hdots, -1, 0, 1\}$.

Our characterisation allows us to enumerate the hyperplanes of maximum weight.

\begin{cor} \label{cor:countweight1}
  The number of weight-$1$ hyperplanes in $\mathbb{R}^n$ is $\binom{2n-1}{n}$.
\end{cor}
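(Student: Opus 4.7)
The plan is to read off the count directly from Theorem~\ref{thm:main}, which characterises the weight-$1$ hyperplanes as exactly those $h = \{x : \sum_{i=1}^n c_i x_i = 1\}$ whose coefficient vector $(c_1,\dots,c_n)$ is an integer tuple with $c_i \le 1$ for all $i$ and $\sum_i c_i \ge 1$. First I would note that distinct such coefficient vectors give distinct hyperplanes (the equation is normalised to have right-hand side $1$, so no rescaling ambiguity arises), reducing the problem to a pure enumeration.

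Next I would perform the change of variables $d_i = 1 - c_i$. Condition (i) becomes $d_i \ge 0$, condition (iii) becomes $d_i \in \mathbb{Z}$, and condition (ii) becomes $\sum_{i=1}^n d_i \le n-1$. So the count equals the number of non-negative integer tuples $(d_1,\dots,d_n)$ with $\sum_i d_i \le n-1$.

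Finally I would apply the standard stars-and-bars trick: introduce a slack variable $d_{n+1} \ge 0$ so that $\sum_{i=1}^{n+1} d_i = n-1$, giving
\[
  \binom{(n-1) + (n+1) - 1}{(n+1)-1} = \binom{2n-1}{n}
\]
solutions, as required. There is essentially no obstacle here — all the work has already been done in Theorem~\ref{thm:main}; the corollary is just bookkeeping once the characterisation is in hand, so the proof should be only a few lines.
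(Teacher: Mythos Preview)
Your argument is correct. Both proofs rest entirely on the characterisation of Theorem~\ref{thm:main}, so the only difference is in how the resulting set of integer tuples is counted. The paper sets up an explicit bijection between the admissible coefficient vectors $(c_1,\dots,c_n)$ and the set $B_n \subseteq \{-1,+1\}^{2n-1}$ of sign sequences with exactly $n$ entries equal to $+1$: each $c_\ell$ is encoded as a run of $(1-c_\ell)$ copies of $-1$ followed by a single $+1$, and conversely the partial sums between consecutive $+1$'s recover the $c_\ell$. Your route --- substitute $d_i = 1 - c_i$ and count non-negative integer solutions to $\sum_i d_i \le n-1$ by stars-and-bars with a slack variable --- is more direct and avoids building an explicit bijection. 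The paper's encoding, on the other hand, gives a concrete combinatorial model for the weight-$1$ planes (and, if one unwinds it, is essentially the same stars-and-bars correspondence in disguise, with $-1$'s playing the role of stars and $+1$'s the role of bars). Either way the enumeration is immediate once Theorem~\ref{thm:main} is available.
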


\begin{proof}
 We enumerate the weight-$1$ hyperplanes by establishing a bijection from the set $B_n \subseteq \{-1, +1\}^{2n-1}$ of all sequences $(b_1, \dots, b_{2n-1})$ with exactly $n$ positive entries.  
  For a sequence $(b_1, \dots, b_{2n-1})$ in $B_n$, let $i_\ell$ be the indices such that $b_{i_\ell} = +1$ for $\ell = 1, 2, \dots, n$; we arrange these indices so that $i_1 < i_2 < \cdots < i_n$, and we set $i_0 = 0$ for later usage.  Now we define a mapping $\varphi$ from $B_n$ to $\mathbb{Z}^n$ as follows: for $b = (b_1, \dots, b_{2n-1}) \in B_n$, let $\varphi(b) = (c_1, \hdots, c_n)$, where the coefficients $c_\ell$ are given by
  \[
    c_\ell = \sum_{j = i_{\ell-1}+1}^{i_\ell} b_j, \qquad \ell = 1, 2, \dots, n.
  \]
  For instance, $\varphi(+1,+1,-1,-1,+1,-1,+1,+1,-1) = (1, 1, -1, 0, 1)$. 
  
  It is clear that the image of $\varphi$ is exactly those vectors $(c_1, \hdots, c_n)$ that satisfy the conditions (i), (ii), and (iii) in Theorem~\ref{thm:main}, and the inverse mapping $\varphi^{-1}$ can be easily defined: simply expand every coefficient $c_\ell$ into a sequence of $|c_\ell|+1$ terms of $-1$ followed by a $+1$. This shows that $\varphi$ is a bijection, and the number of weight-1 hyperplanes in $\mathbb{R}^n$ is the cardinality of $B_n$.
\end{proof}

The characterisation of maximum weight hyperplanes is also useful for determining precise values of the extremal function $f(n,k)$. As an example of an application of our result, we will prove the following, extending results of Clifton and Huang (who determined $f(5,k)$ for all $k \ge 15$).

\begin{restatable}{thm}{nequalssix} \label{thm:nequals6}
For $k \ge 65$, we have
\[ f(6,k) = \ceil*{\frac{49k}{20}}\]
whenever $k \not\equiv 2, 11 \pmod{20}$.
\end{restatable}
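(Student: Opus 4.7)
The lower bound $f(6,k) \ge \lceil 49k/20 \rceil$ follows immediately from \eqref{eqn:LPlowerbound} together with the computation $H_6 = 49/20$. All the work lies in the matching upper bound: for each $k$ in the allowed range we must exhibit an almost-$k$-cover of $Q^6$ of size exactly $\ceil*{49k/20}$. The plan is to produce small building blocks by integer linear programming (ILP), and then patch any large $k$ together from these by disjoint union.

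Theorem~\ref{thm:main} is the crucial tool that makes the ILP tractable. Suppose $\mathcal{H}$ is an almost-$k$-cover of $Q^6$ of size $m = \ceil*{49k/20}$, and let $f$ denote the number of hyperplanes in $\mathcal{H}$ of weight at most $1 - \tfrac{1}{6}$. Summing Theorem~\ref{thm:main} over $h \in \mathcal{H}$ and comparing to the total weight of $Q^6$ yields
\[
  \tfrac{49}{20}\,k \;=\; H_6\, k \;\le\; w(\mathcal{H}) \;\le\; m - \tfrac{f}{6},
\]
so $f \le 6\bigl(m - \tfrac{49k}{20}\bigr) \le 6 \cdot \tfrac{19}{20} < 6$. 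Thus at most five hyperplanes in such a cover can fail to be of maximum weight, and by Corollary~\ref{cor:countweight1} every maximum-weight hyperplane lies in an explicit list of $\binom{11}{6} = 462$ candidates. In particular, when $k \equiv 0 \pmod{20}$ the slack is zero, forcing $f=0$: the cover must consist entirely of weight-$1$ hyperplanes.

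The main step is now computational. First, an ILP solver produces an almost-$20$-cover $\mathcal{H}_{20}$ of size $49$ drawn from the $462$ weight-$1$ hyperplanes. Then for each valid residue $r \in R := \{0,1,3,4,5,6,7,8,9,10,12,13,14,15,16,17,18,19\}$ modulo $20$, the solver produces an almost-$k_r$-cover $\mathcal{H}_{k_r}$ of size $\ceil*{49k_r/20}$, for some $k_r \equiv r \pmod{20}$ with $k_r \le 65$. Any $k \ge 65$ with $k \equiv r \pmod{20}$ is then written as $k = k_r + 20t$ with $t \ge 0$, and the disjoint union $\mathcal{H}_{k_r} \cup t\cdot\mathcal{H}_{20}$ is an almost-$k$-cover of size $\ceil*{49k_r/20} + 49t = \ceil*{49k/20}$, matching the lower bound.

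The main obstacle is running the ILPs: even with the weight-$1$ restriction cutting the number of variables to about $462$ (plus a handful of slots for lower-weight hyperplanes when $r \ne 0$), eighteen residue classes have to be handled. Exploiting the hyperoctahedral symmetry $S_6 \times \{\pm 1\}^6$ acting on $Q^6$ groups the weight-$1$ hyperplanes into relatively few orbits and is the standard way to accelerate such searches. The threshold $k \ge 65$ in the theorem reflects the largest $k_r$ needed across these eighteen residue classes; the excluded residues $2$ and $11$ fall outside the scope of the theorem, presumably because in those classes the ILP has no solution of size $\ceil*{49k/20}$, forcing a strictly larger cover.
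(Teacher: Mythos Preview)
Your approach is essentially the paper's: lower bound from~\eqref{eqn:LPlowerbound}, an ILP over the $462$ weight-$1$ hyperplanes to produce base covers, $f(6,20)=49$ as the key building block, and subadditivity to handle all large $k$ residue-by-residue. A few small corrections are in order. First, the paper's ILP is run \emph{only} over the weight-$1$ hyperplanes, with no ``handful of slots'' for lower-weight planes; the slack computation you give is used in the paper only afterwards, to argue that the excluded residues $2$ and $11$ genuinely require $\ceil*{49k/20}+1$ for the small $k$ tested. Second, the concrete base cases are $k=20$ together with $k\in\{23,\ldots,42\}\setminus\{31,33,42\}$, plus $k=53$ to rescue the residue $13$; so the largest $k_r$ actually used is $53$, and the threshold $65$ is a clean cut-off rather than the literal maximum of the $k_r$. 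Third, the hyperoctahedral group $S_6\times\{\pm 1\}^6$ does \emph{not} act on this problem: the sign flips $x_i\mapsto 1-x_i$ move the origin and do not preserve the weight function, so only the coordinate-permutation subgroup $S_6$ is available for symmetry reduction. None of these points affects the logical structure of your argument, which matches the paper's.
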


\paragraph{Organisation}

In Section~\ref{sec:GBR}, we will review Clifton and Huang's proof of Theorem~\ref{thm:CH}, introducing terminology that we will need in our own proofs. In Section~\ref{sec:proof}, we use this framework to prove Theorem~\ref{thm:main}, and prove some further stability. In Section~\ref{sec:covers}, we combine our stability result with the linear programming approach to prove Theorem~\ref{thm:nequals6}, and also show that the lower bound~\eqref{eqn:LPlowerbound} is not tight infinitely often. Finally, in Section~\ref{sec:discussion}, we provide some concluding remarks and outline possible directions for further research.

\section{The good, bad, and the redundant} \label{sec:GBR}

In this section we will review the proof of Theorem~\ref{thm:CH}, primarily with the aim of establishing some terminology and notation that we shall use in our own proofs in the next section.

Let $h$ be the affine hyperplane in $\mathbb{R}^{n}$ defined by the equation $\sum_{i=1}^n c_i x_i = 1$. We wish to show that $w(h) \le 1$; that is, the sum of the weights of the points of $\Qn$ contained in $h$ is at most $1$. For this, it is useful to identify a point $x \in \Qn$ with the subset $S \subseteq [n]$ whose characteristic vector it is; that is, $\{ i : x_i = 1\}$. The idea behind the proof is to associate each set $S$ covered by the hyperplane $h$ with a disjoint set of permutations in $S_n$, whose size is proportional to the weight of $x$. Since the total number of permutations is bounded by $n!$, this will in turn bound the weight of the hyperplane $h$.

\begin{defn}
Given some set $S\subseteq [n]$ such that $\sum_{i\in S}c_{i}=1$, we say a permutation $\pi\in S_n$ \emph{yields} $S$ provided that 
\begin{enumerate}[(i)]
\item $\pi([|S|])=S;$ that is, $S$ is an initial segment of $\pi$, and

\item $\displaystyle\sum_{i=1}^{\ell}c_{\pi(i)}\begin{cases}
<1 &\text{ if }\ell<|S|,\\
=1 & \text{ if }\ell=|S|,\\
\end{cases} $ where $1 \le \ell \le |S|$.  
\end{enumerate}
\end{defn}

From the definition, it is clear that each $\pi \in S_{n}$ can yield at most one subset $S\subseteq [n]$. We call a permutation $\pi \in S_{n}$ \emph{bad} if it does not yield any set $S\subseteq [n]$, and define $\mathcal{B}$ to be the collection of all bad permutations in $S_n$.

Clifton and Huang~\cite{clifton2020almost} proved that, for every set $S \subseteq [n]$ such that $\sum_{i\in S} c_i = 1$, and for every cyclic permutation $\sigma$ of $S$, there is at least one starting point in $S$ such that if we unfold $\sigma$ to a linear permutation $\pi_{\sigma}$ of $S$, and extend it arbitrarily to any $\pi\in S_n$ with $\pi|_{[|S|]}=\pi_{\sigma}$,  then $\pi$ yields $S$. In the case where $\sigma$ admits two or more such starting points, we call $\sigma$ \emph{switchable}, and from the available options, we choose $\pi_\sigma$ such that the initial entry $\pi_{\sigma}(1)$ is the largest. 

Now, consider a permutation $\pi\in S_n$. If $\pi$ is not bad, there is some set $S$ that $\pi$ yields. Let $\sigma$ be the cyclic permutation of $S$ induced by $\pi |_{[|S|]}$. If $\pi |_{[|S|]}= \pi_{\sigma}$, we say that $\pi$ is \emph{good}, and otherwise we say that $\pi$ is \emph{redundant}. We let $\mathcal{G} \subseteq S_n$ be the set of good permutations, and $\mathcal{R} \subseteq S_n$ the set of redundant permutations. Note that this gives a partition $S_n = \mathcal{G} \cup \mathcal{B} \cup \mathcal{R}$ of the set of the permutations of $[n]$ into the subsets of good, bad, and redundant permutations, whence
\begin{equation}
\label{eq:GBR}
n! = |\mathcal{G}| + |\mathcal{B}| + |\mathcal{R}|.
\end{equation}

Given a set $S$ covered by $h$, note that there are $(|S|-1)!$ cyclic permutations $\sigma$ of $S$, each of which gives $(n-|S|)!$ good permutations (which must start with $\pi_{\sigma}$). Hence, abusing notation to write $S \in h$ to indicate that $S$ is covered by $h$,
\begin{equation*}
|\mathcal{G}|=\sum_{S\in h} (|S|-1)!(n-|S|)!,
\end{equation*}
or
\begin{equation*}
\frac{|\mathcal{G}|}{n!}=\sum_{S\in h}\frac{(|S|-1)!(n-|S|)!}{n!}=\sum_{S\in h}\frac{1}{|S|\binom{n}{|S|}} = \sum_{x \in h} w(x) = w(h).
\end{equation*}
Dividing~\eqref{eq:GBR} by $n!$ and rearranging then yields 
\begin{equation}
w(h)=\frac{|\mathcal{G}|}{n!}=1-\frac{|\mathcal{B}|}{n!}-\frac{|\mathcal{R}|}{n!}\leq 1\label{eq:wh}.
\end{equation}

This proves Theorem~\ref{thm:CH}. Moreover, it outlines how one can establish stability --- to prove that non-maximum-weight hyperplanes have weight bounded away from $1$, we need to show that they admit many bad or redundant permutations.

\section{Weights of hyperplanes}
\label{sec:proof}

We will now use the framework set up in the previous section to prove our main result, Theorem~\ref{thm:main}. Following that, we shall prove some further stability, providing a partial characterisation of all hyperplanes of large weight.

\subsection{The proof of Theorem~\ref{thm:main}}

We first prove that if $h = \{ x : \sum_{i=1}^n c_i x_i = 1 \}$, where the coefficients $c_i$ satisfy the three conditions in Theorem~\ref{thm:main}, then $w(h) = 1$. Per the discussion in Section~\ref{sec:GBR}, this amounts to showing that every permutation in $S_n$ is good, i.e., $\mathcal{G} = S_n$.

For any permutation $\pi \in S_n$, let $t$ be the smallest index such that
\begin{equation*}
 \quad \sum_{j=1}^t c_{\pi(j)} = 1.
\end{equation*}
Note that the conditions (i), (ii) and (iii) guarantee that such an index exists, and moreover that $\sum_{j=1}^{\ell} c_{\pi(j)} < 1$ for all $1 \le \ell \le t-1$. Hence, $\pi$ yields the subset $S = \{ \pi(1), \dots, \pi(t) \}$.

  Now consider the cyclic permutation $\sigma = \langle \pi(1), \dots, \pi(t) \rangle$ of $S$.  We argue that $\pi(1)$ is the unique starting point for $\sigma$, that is, $\pi|_{[t]} = \pi_{\sigma}$.
  There is nothing to prove if $t=1$, so we may assume $t>1$ and $\pi(s)$ is another starting point of $\sigma$ with $1 < s \leq t$.
  Then $\sum_{i=s}^t c_{\pi(i)} < 1$.  Since all the coefficients $c_j$'s are integers, this implies that $\sum_{i=s}^t c_{\pi(i)} \leq 0$.  But this leads to
  \[
    \sum_{i=1}^{s-1} c_{\pi(i)} = \sum_{i=1}^t c_{\pi(i)} - \sum_{i=s}^t c_{\pi(i)} \geq 1 - 0 = 1,
  \]
  which contradicts our choice of $t$.  Hence we have $\pi|_{[t]} = \pi_{\sigma}$, and so $\pi$ is good. As $\pi$ was an arbitrary permutation, it follows that $\mathcal{G} = S_n$, as required.

  \medskip

  To complete the proof of Theorem~\ref{thm:main}, we need to show that if $w(h) < 1$, then $w(h) \le 1 - \frac1n$. Note that by what we have just shown, we know at least one of the following three cases must occur:
\begin{enumerate}[(1)]
    \item $c_{i}>1$ for some $i\in [n]$,

    \item $\sum_{i=1}^{n}c_{i}<1$, or 

    \item $c_{i}\notin\mathbb{Z}$ for some $i\in [n]$.
\end{enumerate}

We treat each of these cases in turn.

\paragraph{Case (1)} $c_{i}>1$ for some $i\in [n]$. 

Observe that any permutation $\pi$ with $\pi(1)=i$ is bad, since if a permutation yields a set $S\in [n]$, its first $|S|$ partial sums are at most $1$. Thus, $|\mathcal{B}|\geq (n-1)!$, and so by \eqref{eq:wh} we have $w(h)\leq 1-\frac{1}{n}$.

\paragraph{Case (2)} $\sum_{i=1}^{n} c_{i} < 1$.

We will show that each cyclic permutation $\sigma$ of $[n]$ admits a starting point for which the resulting permutation $\pi\in S_n$ is bad. Then $|\mathcal{B}|\geq (n-1)!$, which once again implies $w(h)\leq 1-\frac{1}{n}$. 

Now let $\pi'$ be the permutation we obtain from $\sigma$ by starting at $1$.  For $1\leq t\leq n$, define $a_{t} =\sum_{i=1}^{t}c_{\pi^{\prime}(i)}$. Let $t_{0} \in [n]$ be any index such that $a_{t_{0}}\geq a_{t}$ for all $t\in [n]$. In particular, 
\begin{equation*}
a_{t_{0}}\geq a_{n}=\sum_{i=1}^{n}c_{\pi'(i)}=\sum_{i=1}^{n}c_{i}.
\end{equation*}

If $t_0 = n$, then $a_t \leq a_{t_0} = a_n < 1$ for each $t \in [n]$, which means that $\pi'$ is already bad.
Otherwise, take $\pi$ to be the permutation obtained by starting $\sigma$ at $t_{0}+1$ . We shall show that $\pi$ is bad. If not, then it yields some set $S \subseteq [n]$, and so there is some $j$ with $\sum_{i=1}^{j}c_{\pi(i)}=1$. If $j\leq n-t_{0}$ then 
\begin{equation*}
\sum_{i=1}^{j}c_{\pi(i)}=\sum_{i=t_{0}+1}^{t_{0}+j}c_{\pi^{\prime}(i)}=a_{t_{0}+j}-a_{t_{0}}\leq 0,
\end{equation*}  
by the choice of $t_{0}.$ Otherwise, if $n-t_{0}+1\leq j\leq n,$ then 
\begin{equation*}
\sum_{i=1}^{j}c_{\pi(i)}=\sum_{i=t_{0}+1}^{n}c_{\pi^{\prime}(i)}+\sum_{i=1}^{j-n+t_{0}}c_{\pi^{\prime}(i)}=a_{n}-a_{t_{0}}+a_{j-n+t_{0}}\leq a_{n}<1
\end{equation*}
where the last inequality holds by $a_{t_{0}}\geq a_{j-n+t_{0}}$ and the strict inequality follows by assumption. Hence, there is no such $j,$ and $\pi$ must be bad, completing this case.

\paragraph{Case (3)} $c_{i}\notin\mathbb{Z}$ for some $i\in [n]$. 

If $c_{i}$ is the unique non-integral coefficient, then any permutation with $\pi(1)=i$ is bad, as all its partial sums will be non-integral. Thus, as in Case (1), we will have $w(h)\leq 1-\frac{1}{n}$. Hence we may assume there are $r\geq 2$ non-integral coefficients, which we may assume (without loss of generality) to be $c_{1},c_{2},\dots,c_{r}$. In this case, we shall show that for every cyclic permutation of $[n]$ there is a starting point for which the corresponding permutation $\pi\in S_{n}$ is either bad or redundant. Then we have $|\mathcal{B}| + |\mathcal{R}|\geq (n-1)!$, and so it again follows from $(\ref{eq:wh})$ that $w(h)\leq 1-\frac{1}{n}.$ 

Let $\sigma$ be a cyclic permutation of $[n]$, and partition it into the cyclic intervals $I_{1},I_{2}, \dots,I_{r}$ starting $1,2,\dots,r$, respectively. If one of these intervals has a partial sum at least 1, then note that it is in fact strictly greater than 1; as $c_{i}$ is the only non-integral coefficient in $I_{i}$, all partial sums are non-integral. Thus, if we start $\pi$ at $i$, we obtain a bad permutation. Otherwise, for every interval $I_{i}$, all the partial sums are strictly less than 1. In this case, consider the permutation $\pi$ starting at $1$. If $\pi$ does not yield a set $S\subseteq [n],$ then it is bad. So, we may assume $\pi$ yields a set  $S\subseteq [n]$. Note that $S$ must contain some $c_j$ for $2\leq j\leq r$, since there must be another non-integral coefficient to make the sum integral. Hence, $S$ intersects more than one of the intervals $I_{i}$ in $\sigma$.

Now, for $1\leq t\leq |S|$, define $a_{t}=\sum_{i=t}^{|S|}c_{\pi (i)}$, and let $t_{0}$ be the smallest index that minimizes $a_{t}$. Note that we have $2\leq t_{0}\leq |S|$, since $\sum_{i=1}^{|S|}c_{\pi (i)}=\sum_{i\in S}c_{i}=1$, while if we choose $t$ be the index of the last non-integral coefficient in $S$ then $a_{t}$ is a partial sum of that interval, and hence $a_{t}<1$. Let $\pi'$ be the permutation obtained by swapping the intervals $\pi([1,t_{0}-1])$ and $\pi([t_{0},|S|])$. Note that we still have $\pi'([|S|])=S$, and that $\pi$ and $\pi'$ induce the same cyclic permutation of $S$.
\begin{claim}
$\pi'$ also yields $S$.
\end{claim}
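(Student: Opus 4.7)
The plan is to verify the two defining conditions of a yielding permutation directly for $\pi'$. Condition (i), $\pi'([|S|]) = S$, is automatic from the block-swap construction and is already noted, and the total sum $\sum_{i=1}^{|S|} c_{\pi'(i)} = 1$ is preserved. So the real content is to bound the proper prefix sums $P_\ell := \sum_{i=1}^\ell c_{\pi'(i)}$ for $1 \le \ell < |S|$. Setting $m := |S| - t_0 + 1$, we have $\pi'(i) = \pi(t_0+i-1)$ for $1 \le i \le m$ and $\pi'(m+j) = \pi(j)$ for $1 \le j \le t_0-1$, so I would split the analysis of $P_\ell$ according to whether $\ell$ falls in the first block or the second.

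For $1 \le \ell \le m$, the prefix sum telescopes through the suffix sums $a_t$: with the convention $a_{|S|+1} = 0$, one gets $P_\ell = a_{t_0} - a_{t_0+\ell}$. If $t_0 + \ell \le |S|$, the minimality of $a_{t_0}$ gives $a_{t_0+\ell} \ge a_{t_0}$, whence $P_\ell \le 0 < 1$. The boundary case $\ell = m$ yields $P_m = a_{t_0}$; here I would invoke the observation made just before the claim, namely that taking $t$ to be the index in $S$ of the last non-integer coefficient gives $a_t < 1$ (because $a_t$ is then a partial sum inside some interval $I_i$, and all such partial sums are strictly less than $1$ in this case). Since $a_{t_0}$ is minimal, $a_{t_0} \le a_t < 1$.

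For $m < \ell \le |S|$, write $\ell = m + s$ with $1 \le s \le t_0 - 1$. Using $a_1 = \sum_{i \in S} c_i = 1$, one computes $P_\ell = a_{t_0} + \sum_{i=1}^{s} c_{\pi(i)} = 1 + a_{t_0} - a_{s+1}$. Since $s+1 \le t_0$ and $t_0$ is the \emph{smallest} index achieving the minimum of $a_t$, the inequality $a_{s+1} > a_{t_0}$ is strict whenever $s+1 < t_0$, giving $P_\ell < 1$ for $s < t_0 - 1$. When $s = t_0 - 1$ (so $\ell = |S|$), $a_{s+1} = a_{t_0}$ and we recover the correct value $P_{|S|} = 1$.

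The main obstacle is purely bookkeeping: securing strict rather than weak inequality at each juncture. Two places are delicate: (a) the boundary $\ell = m$, which is exactly why the existence of an index $t$ with $a_t < 1$ was isolated in the paragraph preceding the claim, and (b) the second regime, which is why $t_0$ must be chosen as the \emph{smallest} minimizer rather than an arbitrary one, so that strict inequality $a_{s+1} > a_{t_0}$ is available for all $s+1 < t_0$.
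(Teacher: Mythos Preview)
Your proof is correct and follows essentially the same route as the paper's: both arguments split the prefix sums of $\pi'$ according to whether the index lies in the first swapped block or the second, use the minimality of $a_{t_0}$ in the first block, and rely on $t_0$ being the \emph{smallest} minimiser to secure the strict inequality in the second block. The only cosmetic difference is that you verify $P_\ell < 1$ directly whereas the paper argues by contradiction from a hypothetical $\sum_{i=1}^j c_{\pi'(i)} \ge 1$; in particular your explicit treatment of the boundary $\ell = m$ (invoking $a_{t_0} < 1$ from the paragraph preceding the claim) makes transparent a point the paper leaves implicit.
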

\begin{proof}[Proof of Claim.]
 Since $\pi^{\prime}([|S|])=S$, the only way that $\pi'$ could fail to yield $S$ is if there is some index $j \in [|S|-1]$ such that $\sum_{i=1}^{j} c_{\pi'(i)}\geq 1$. If $j\leq |S|-t_{0}+1$ then $\pi'([j])=\pi([t_{0},t_{0}+j-1]).$ If this partial sum is positive, let alone greater than or equal to $1$, then 
\begin{align*}
a_{t_{0}+j}=\sum_{i=t_{0}+j}^{|S|}c_{\pi(i)}=\sum_{i=t_{0}}^{|S|}c_{\pi(i)}-\sum_{i=t_{0}}^{t_{0}+j-1}c_{\pi(i)}=a_{t_{0}}-\sum_{i=1}^{j}c_{\pi^{\prime}(i)}<a_{t_{0}},
\end{align*} 
which contradicts the choice of $t_{0}$ ($a_{t_0}$ is the minimum sum). Thus, $j\geq |S|-t_{0}+2$, in which case $\pi'([j])=\pi([t_{0}, |S|])\cup\pi([1,j-|S|+t_{0}-1])$ and \begin{equation*}
\sum_{i=1}^j c_{\pi'(i)} = \sum_{i=1}^{j-|S|+t_{0}-1}c_{\pi(i)}+\sum_{i=t_{0}}^{|S|}c_{\pi(i)}\geq 1.
\end{equation*}
Recall that $\pi$ yields $S$, and so we have $\sum_{i=1}^{|S|}c_{\pi(i)}=1$. We can therefore deduce that $\sum_{i=t_{0}+j-|S|}^{t_{0}-1}c_{\pi(i)}\leq 0$, and so
\begin{equation*}
a_{t_{0}+j-|S|}=\sum_{i=t_{0}+j-|S|}^{|S|}c_{\pi(i)}=\sum_{i=t_{0}+j-|S|}^{t_{0}-1}c_{\pi(i)}+\sum_{i=t_{0}}^{|S|}c_{\pi(i)}=\sum_{i=t_{0}+j-|S|}^{t_{0}-1}c_{\pi(i)}+a_{t_{0}}\leq a_{t_{0}},
\end{equation*}
which again contradicts our choice of $t_{0}$ (recall that $j\leq |S|-1$ which implies $t_{0}+j-|S|<t_{0}$). Hence, there is no such index $j$, and $\pi^{\prime}$ yields $S$ as well.
\end{proof}

Thus, we see that each of these cyclic permutations $\sigma$ of $S$ are switchable, admitting at least two starting points that result in permutations yielding $S$. We defined $\pi_{\sigma}$ to be the permutation with the largest starting entry, and hence the permutation $\pi$ obtained from starting $\sigma$ at $1$ is redundant. Hence, we have shown that for each permutation of $[n]$, there is a starting point giving a bad or redundant permutation, which resolves Case (3).

\medskip

This completes the proof of Theorem~\ref{thm:main}.

\subsection{Further stability}

Theorem~\ref{thm:main} characterises the hyperplanes of maximum possible weight. It is then natural to ask what we can say about hyperplanes of large weight, and indeed, our methods allow us to at least partially describe their coefficients.

\begin{thm} \label{thm:furtherstab}
    Let $h = \{ x : \sum_{i=1}^n c_i x_i = 1 \}$ be a hyperplane in $\mathbb{R}^n$ of weight $w(h) > 1 - \frac{r}{n}$. Then the following hold:
    \begin{itemize}
        \item[(a)] $|\{i : c_i > 1 \}| \le r-1$, and
        \item[(b)] $|\{ i : c_i \notin \mathbb{Z} \}| \le 2r^2 - 1$.
    \end{itemize}
\end{thm}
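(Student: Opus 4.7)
For both parts of Theorem~\ref{thm:furtherstab}, I would argue by contradiction using the identity $|\mathcal{B}| + |\mathcal{R}| = n!(1 - w(h))$ implicit in~\eqref{eq:wh}. The hypothesis $w(h) > 1 - \tfrac{r}{n}$ forces $|\mathcal{B}| + |\mathcal{R}| < r(n-1)!$, so the job is to show that negating either conclusion produces at least $r(n-1)!$ bad or redundant permutations.

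Part~(a) should be a direct extension of Case~(1) in the proof of Theorem~\ref{thm:main}. Suppose $|\{i : c_i > 1\}| \geq r$, with witnesses $i_1, \ldots, i_r$. For each $\ell$, every permutation $\pi$ with $\pi(1) = i_\ell$ is bad, since the first partial sum $c_{i_\ell}$ already exceeds~$1$. The sets of bad permutations starting at distinct $i_\ell$'s are disjoint, so $|\mathcal{B}| \geq r(n-1)!$, contradicting the hypothesis.

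Part~(b) is more delicate. Assume $|J| \geq 2r^2$ where $J = \{i : c_i \notin \mathbb{Z}\}$. First, by part~(a), we may assume $|\{i : c_i > 1\}| \leq r - 1$; otherwise (a) already finishes the proof. Writing $J_* = J \cap \{i : c_i \leq 1\}$, we then have $|J_*| \geq 2r^2 - r + 1$, and every $c_i$ with $i \in J_*$ is strictly less than~$1$. The plan is to fix a cyclic permutation $\sigma$ of $[n]$ and show that at least $r$ of its starting points yield bad or redundant linear permutations; averaging over the $(n-1)!$ cyclic permutations then gives $|\mathcal{B}| + |\mathcal{R}| \geq r(n-1)!$. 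This refines the Case~(3) argument in Theorem~\ref{thm:main}'s proof, which supplied only one such start per $\sigma$. I would partition $\sigma$ cyclically at the positions of $J_*$, producing $|J_*|$ cyclic intervals each headed by a non-integer coefficient. Starting at the head of a ``heavy'' interval (one whose partial sum within $\sigma$ reaches~$1$) gives a bad permutation outright, since the only non-integer in that interval is its lead coefficient so the sum is in fact strictly greater than~$1$. Starting at the head of a ``light'' interval either gives a bad permutation, or else yields a set $S$ on which the Case~(3) swap produces another valid starting position, making the head a redundant start whenever the swap target carries a larger label.

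The main obstacle is extracting $r$ \emph{distinct} bad or redundant starts. The Case~(3) swap produces only a single alternative valid start, and this alternative might coincide across different choices of head, or fail to be larger-labeled, so a naive union bound gives only one such start per $\sigma$. The expected strategy is a pigeonhole argument on the $|J_*| \geq 2r^2 - r + 1$ interval heads: grouping them by the label of their induced swap target (or by the partial-sum profile of their interval) into at most $2r-1$ buckets, some bucket must contain $r+1$ heads, and the structure of that bucket should force $r$ fresh bad or redundant starts, either via heavy intervals or via iterated swaps that produce chains of valid starts for the yielded set. Verifying this pigeonhole cleanly, while ensuring the bad-interval and swap-redundancy contributions are counted disjointly, is the main technical hurdle, and appears to be the source of the quadratic threshold $2r^2$: each fresh bad/redundant start can absorb up to $2r$ non-integer positions in the bookkeeping (roughly $r$ from forbidden swap targets and $r$ from heads that end up redundant but not newly so), giving the budget $r \cdot 2r = 2r^2$.
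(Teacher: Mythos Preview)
Your treatment of part~(a) is correct and coincides with the paper's proof.

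For part~(b), however, there is a genuine gap. Your plan is to find, for each cyclic permutation $\sigma$ of $[n]$, at least $r$ starting points that give bad or \emph{redundant} permutations. The difficulty you flag is real: when a start yields a set $S$ admitting a second valid starting point, you only know the start is redundant if its label is not the largest among the valid starts, and your proposed pigeonhole on ``swap targets'' into ``at most $2r-1$ buckets'' is not justified---no reason is given for that bound on the number of buckets, and the sketch does not explain how a large bucket would produce $r$ genuinely distinct bad or redundant starts.

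The paper avoids this issue by working with \emph{switchable} rather than redundant permutations. A permutation that yields a set $S$ is called switchable if the induced cyclic order on $S$ admits at least two valid starting points; crucially, this is a symmetric notion, so globally at least half of all switchable permutations are redundant. The paper then proves the following dichotomy for each cyclic permutation $\sigma$: either $r$ of the fractional-headed intervals give bad starts, or at least $s/r = 2r$ of them give bad or switchable starts. The second alternative is established not by a pigeonhole on swap targets but by a structural argument: among any $r$ consecutive intervals whose prefix sums are all negative, the yielded sets $S_i$ must end in pairwise distinct intervals (because $S_i$ ends strictly after $S_{i+1}$, and two consecutive $S_i$ cannot end in the same interval for parity reasons), and all but at most $r-1$ of those ending intervals have small enough prefix sums to make the corresponding $\pi_i$ switchable. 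Combining the two alternatives over the $(n-1)!$ cyclic permutations and halving the switchable count gives $|\mathcal{B}| + |\mathcal{R}| \ge r(n-1)!$. This two-step ``switchable then halve'' manoeuvre is the missing idea in your proposal, and it is precisely what accounts for the factor of~$2$ in the threshold $2r^2$.
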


Before we embark on the proof, we first note that the bound in part (a) is best possible. Indeed, if $c_1 = \hdots = c_{r-1} = 2, c_{r} = \hdots = c_n = 1$, then it is easy to see that a permutation $\pi$ is bad if $\pi(1) \in [r-1]$, and good otherwise. This implies $w(h) = 1 - \frac{r-1}{n} > 1 - \frac{r}{n}$, and we have $r-1$ coefficients larger than $1$.

On the other hand, we do not expect the bound in part (b) to be tight, and rather expect that there can be at most $O(r)$ non-integral coefficients. However, note that a bound of $r$ is not true in general. For instance, if $n = 2r-1$, then we can take $c_i = \frac12$ for all $i$; this has weight $\frac12$ but $n = 2r-1$ fractional coefficients. Perhaps, though, if one imposes some lower bound on $n$, then it could be true that one must have fewer than $r$ fractional coefficients.

Finally, we note that Theorem~\ref{thm:furtherstab} only has two parts in its characterisation, as compared to the three parts in Theorem~\ref{thm:main}. One might also expect that, if the hyperplane has weight close to one, then the sum of the coefficients should not be too small. However, this is not true --- one could have $c_1 = c_2 = \hdots = c_{n-1} = 1$, with $c_n$ tending to $- \infty$. Then the sum of the coefficients is very small, but all permutations except those starting with $n$ are good, meaning $w(h) = 1 - \frac1n$. Hence, it appears to be somewhat complicated to formulate an appropriate condition on the sums of the coordinates.

\begin{proof} 
For (a), note that any permutation starting with a coefficient larger than $1$ is bad. Thus, if there are at least $r$ coefficients larger than $1$, we have $|\mathcal{B}| \ge r(n-1)!$, and then~\eqref{eq:wh} implies $w(h) \le 1 - \frac{r}{n}$.

\medskip

For (b), we make the following claim.

\begin{claim}
    If $h$ has $s$ fractional coefficients, then for any circular permutation of $[n]$ and $b \in \mathbb{N}$, there are either $b$ starting points that give bad permutations, or $\frac{s}{b}$ that give bad or switchable permutations.
\end{claim}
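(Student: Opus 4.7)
I would follow the partition strategy from Case (3) of the proof of Theorem~\ref{thm:main}: chop the circular permutation of $[n]$ at its $s$ fractional positions $p_1, \ldots, p_s$ into cyclic intervals $I_1, \ldots, I_s$, with $I_j$ beginning at $p_j$. Call $I_j$ \emph{bad} if some partial sum within $I_j$ is at least $1$; since $c_{p_j}$ is the only non-integer coefficient inside $I_j$, such a partial sum must in fact be strictly greater than $1$, which (exactly as in Case (3)) forces the permutation starting at $p_j$ to be bad. If at least $b$ of the intervals are bad, we have immediately exhibited $b$ starting points giving bad permutations, which is the first alternative of the claim.

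Otherwise, at most $b-1$ intervals are bad, so at least $s - b + 1$ intervals are \emph{nice}, meaning every partial sum within them is strictly below $1$. For $s \geq b$ one checks that $s - b + 1 \geq s/b$, and for $s < b$ the second alternative of the claim is already trivial. The core of the plan is then to show that each nice interval contributes a starting point that is bad or switchable. Fixing a nice $I_j$ and starting at $p_j$, if the resulting $\pi$ yields no set then it is bad; otherwise, as $\pi$ yields some $S$ with integer coefficient sum while $c_{p_j}$ is non-integer, $S$ must contain at least one further fractional coefficient. I would then replay the swap argument of Case (3): setting $a_t = \sum_{i=t}^{|S|} c_{\pi(i)}$ with smallest minimiser $t_0$, proving $t_0 \geq 2$ shows that swapping the two blocks $\pi([1,t_0-1])$ and $\pi([t_0,|S|])$ produces a second valid starting point for the cycle $\sigma_S$, certifying that $\sigma_S$ is switchable.

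The hardest part will be verifying $t_0 \geq 2$, equivalently producing some $t \geq 2$ with $a_t < 1$. The natural candidate is $t^*$, the position in $S$ of its last fractional coefficient, because $a_{t^*}$ then coincides with a partial sum of the interval of the circular permutation starting at that fractional coefficient; so whenever this interval is nice, $a_{t^*} < 1$ for free. The delicate scenario is when this last fractional of $S$ lies in one of the (at most $b-1$) bad intervals. My plan for handling it is to fall back on an earlier fractional in $S$ whose interval is nice when one exists, and otherwise to run a double-counting argument that charges the exceptional nice starts against the few bad intervals already counted, so that the total number of bad or switchable starting points stays at least $s/b$.
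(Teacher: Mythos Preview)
Your setup is right --- partitioning the circular permutation at the $s$ fractional positions and disposing of the case where at least $b$ of the resulting intervals have a partial sum exceeding $1$ is exactly how the paper begins. The gap is in your treatment of the remaining ``nice'' intervals. Your core assertion, that every nice start is bad or switchable, is false: take $I_j$ with all partial sums strictly negative, and suppose $\pi_j$ yields a set $S$ all of whose proper prefix sums are $\le 0$ (for instance $c_1=-\tfrac12$, $c_2=\tfrac32$, $\pi_j=(1,2)$). Then every tail sum satisfies $a_t=1-\sum_{i<t}c_{\pi_j(i)}\ge 1$ for $t\ge 2$, so $t_0=1$ and the swap does nothing; $\pi_j$ is simply good. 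Your two proposed fallbacks do not rescue this. If $t'$ is the position of an earlier fractional in $S$, then $a_{t'}$ spans several of the intervals $I_\ell$, so niceness of the interval beginning at $\pi_j(t')$ tells you nothing about $a_{t'}$. And the charging idea is underdetermined: without additional structure there is no reason a single bad interval cannot absorb many exceptional nice starts, so bounding the exceptionals by $b-1$ is unjustified.

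The paper gets around this with a finer trichotomy. If $I_j$ has some initial sum lying in $(0,1)$, then dropping that positive prefix already produces a tail of $S$ with sum strictly below $1$, so $t_0\ge 2$ and $\pi_j$ is switchable outright; this is the observation your argument lacks. The hard case is therefore only the intervals whose initial sums are all negative, and for these the paper does \emph{not} argue interval by interval. Instead it takes any $b$ \emph{consecutive} such intervals $I_1,\ldots,I_b$ and shows that the sets $S_1,\ldots,S_b$ they yield must terminate in pairwise distinct intervals (using both the single-fractional parity between neighbours and the negativity of each $I_i$ to force $S_i$ to end strictly after $S_{i+1}$). Since at most $b-1$ of those terminal intervals can be of the ``bad'' type, at least one $S_i$ ends with a tail in $(0,1)$, making that $\pi_i$ switchable. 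This guarantees one bad-or-switchable start in every block of $b$ consecutive intervals, and averaging over all $s$ blocks gives the $s/b$ bound. It is this consecutive-block argument, not a per-interval one, that you are missing.
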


Let us first see how the claim gives the result. Suppose for contradiction that $h$ has $s = 2r^2$ fractional coefficients, and set $b = r$. Then we know that the $(n-1)!$ circular permutations of $[n]$ are of two types --- the first give rise to at least $r$ bad permutations, and the second give rise to $2r$ permutations that are either bad or switchable. Suppose there are $\alpha (n-1)!$ circular permutations of the first kind, and thus $(1 - \alpha)(n-1)!$ circular permutations of the second kind. If we let $\mathcal{S}$ denote the set of switchable permutations, we have $|\mathcal{B}| + |\mathcal{S}| \ge r\alpha (n-1)! + 2r(1 - \alpha)(n-1)!$, with $|\mathcal{B}| \ge r\alpha (n-1)!$.

Now notice that at least half of all switchable permutations are redundant, and so it follows from the above inequalities that $|\mathcal{B}| + |\mathcal{R}| \ge r(n-1)!$, and so the weight of the hyperplane is at most $1 - \frac{r}{n}$.
\end{proof}

To finish, we prove the claim.

\begin{proof}[Proof of Claim]

Fix a circular permutation $\sigma$ of $[n]$, and let $I_1, I_2, \hdots, I_s$ be the (cyclic) intervals that start with fractional coefficients, labelled in cyclic order. For $1 \le i \le s$, let $\pi_i$ be the linear permutation of $[n]$ obtained from $\sigma$ by starting at the $i$th fractional coefficient (so $I_i$ is an initial segment of $\pi_i$).

Now since each $I_i$ starts with a fractional coefficient, and has no other, all of its initial sums are fractional. Thus, if $I_i$ has any initial sum that is at least $1$, it is strictly larger than $1$, and so $\pi_i$ will be a bad permutation. Thus, if there are at least $b$ such intervals, then we obtain $b$ bad permutations, and we are done.

Suppose instead that $I_i$ has an initial sum in the interval $(0,1)$. If the permutation $\pi_i$ is good, then it yields a set $S$. Thus, if we consider the tail interval of $S$ with the smallest sum, this will be a proper subinterval of $S$ (since all of $S$ has sum $1$, but we can drop the initial sum in $I_i$ with positive sum to obtain a tail interval with smaller sum). Then, as in the proof of Theorem~\ref{thm:main}, we can rotate $S$ to obtain another permutation that yields $S$ with the same cyclic permutation of $S$. Hence, $\pi_i$ is switchable in this case.

\medskip

In the remaining case, then, all initial sums of $I_i$, including $I_i$ itself, must be strictly negative. Suppose we have $b$ such intervals appearing consecutively in $\sigma$, and assume without loss of generality that these are $I_1, I_2, \hdots, I_b$. If all of these intervals are good, let $S_i$ be the set yielded by $\pi_i$.

First observe that $S_i$ and $S_{i+1}$ cannot end in the same interval. Indeed, if this were the case, then $S_i \triangle S_{i+1}$ would be $I_i \cup J$, where $J$ is the difference of two initial sums of the interval in which $S_i$ and $S_{i+1}$ end. However, the sum of $I_i$ is fractional, while the sum of $J$ must be an integer, and thus we cannot have both $S_i$ and $S_{i+1}$ having sum equal to $1$.

Next, observe that $S_i$ must end after $S_{i+1}$. If not, then $S_{i+1}$ contains $S_i \setminus I_i$ as an initial sum. However, the sum of $S_i$ is $1$, while the sum of $I_i$ is negative, so this means that $S_{i+1}$ has an initial sum that is strictly larger than $1$, which is a contradiction.

Hence, it follows that the sets $S_i$, $1 \le i \le b$, all end in distinct intervals. If $S_i$ ends in the interval $I_j$, then notice that $I_j \cap S_i$ is a tail sum of $S_i$, and it must be positive. If this sum is in $(0,1)$, then as before, we will be able to rotate $S_i$, which means that $\pi_i$ is switchable. Otherwise, $I_j$ contains an initial sum that is larger than $1$, but we only have at most $b-1$ such intervals. Thus, one of $\pi_1, \hdots, \pi_b$ must be switchable.

Hence, we have shown that every set of $b$ consecutive intervals has at least one bad or switchable permutation. By averaging over the $s$ intervals, it follows that there are at least $\frac{s}{b}$ bad or switchable permutations, as required.
\end{proof}

\section{Almost $k$-covers}
\label{sec:covers}

In this section, we show how our stability result can be used in the determination of $f(n,k)$, the minimum number of hyperplanes needed for an almost $k$-cover of the $n$-dimensional hypercube $\Qn$.

\subsection{Constructing small covers}

Recall that $f(n,k)$ is the solution to the integer linear program where we have a variable for every hyperplane, representing the multiplicity with which it appears in the cover, and a constraint for every nonzero point in $\{0,1\}^n$, ensuring the point is covered at least $k$ times.

Unfortunately, integer linear programming is notoriously difficult to solve, and this is especially true in this setting, as the number of variables involved grows incredibly quickly. Indeed, there are infinitely many hyperplanes in $\mathbb{R}^n$ that could, in principle, be included in our cover. However, we can finitise our problem by observing that we need only consider hyperplanes that intersect the hypercube \emph{maximally} --- that is, if $H_1$ and $H_2$ are hyperplanes, both avoiding the origin, and $H_1 \cap \{0,1\}^n \subseteq H_2 \cap \{0,1\}^n$, then we can replace any occurrence of $H_1$ in a cover with a copy of $H_2$.

Since any $n$ points in $\mathbb{R}^n$ determine a unique hyperplane, it follows that any maximally intersecting hyperplane must contain at least $n$ points of the hypercube. In particular, this implies that there are at most $\binom{2^n-1}{n}$ hyperplanes we need to consider, making the integer linear program finite.

Unfortunately, there is a considerable difference between \emph{finite} and \emph{computationally feasible}, and this upper bound of $\binom{2^n-1}{n}$ grows far too fast to leave us with any hope of computing exact answers even when $n$ is as small as $6$. While it should be pointed out that this is indeed just an upper bound, and some hyperplanes are significantly overcounted, brute-force enumeration of the maximally intersecting hyperplanes (which we could only carry out for $n \leq 5$) shows that the true number also exhibits rapid growth.

\begin{table}[ht]
    \centering
    \begin{tabular}{c|c|c|c|c|c|c}
        $n$ & 1 & 2 & 3 & 4 & 5 & 6 \\ \hline 
        Upper bound & 1 & 3 & 35 & 1365 & 169911 & 67945521 \\
        Actual count & 1 & 3 & 11 & 95 & 2629 & ?? \\
        Weight-$1$ planes & 1 & 3 & 10 & 35 & 126 & 462
    \end{tabular}
    \caption{The number of maximally intersecting and weight-$1$ hyperplanes in $\mathbb{R}^n$.}
    \label{tab:maxhyperplanes}
\end{table}

In order to be able to proceed computationally, then, it is necessary to restrict the search space. A natural place to start is by only considering hyperplanes of maximum weight. Indeed, the Clifton--Huang lower bound~\eqref{eqn:LPlowerbound} shows that in any almost $k$-cover, the total weight of the hyperplanes must be at least $H_n k$, where $H_n$ is the $n$th Harmonic number. To minimise the size of the cover, then, we would want each hyperplane to have as much weight as possible.

As we saw in Corollary~\ref{cor:countweight1}, there are far fewer hyperplanes of weight $1$; the number of these is the much more modest $\binom{2n-1}{n}$. Moreover, Theorem~\ref{thm:main} characterises these maximum-weight planes, so we are able to set up the corresponding integer linear program and efficiently search for small covers when $n = 6$. In many cases we are able to find covers matching the Clifton--Huang lower bound, as shown in Theorem~\ref{thm:nequals6}, which we first restate.

\nequalssix*

\begin{proof}
    The Clifton--Huang lower bound \eqref{eqn:LPlowerbound} gives $f(n,k) \geq \ceil*{H_6k} = \frac{49k}{20}$, and so we need only prove the upper bound. To this end, we solved the integer linear program corresponding to $f(n,k)$ for various values of $k$, restricting ourselves to using the $462$ hyperplanes of weight $1$.

    We noted that $f(6,60)=147$, which comes from the general construction given by Clifton and Huang~\cite{clifton2020almost}. We next considered the case $k = 20$. The lower bound implies $f(6,20) \ge 49$, with equality only possible if all hyperplanes involved have weight $1$. Our integer linear program solver found such an almost-$k$ cover, which we have provided in Appendix~\ref{app:20cover}. Note that this cover includes hyperplanes with coefficients other than $0$ or $1$; that is, it uses some of the new weight-$1$ hyperplanes described in Theorem~\ref{thm:main}.

    Having thus established that $f(6,20) = 49$, we turn to other values of $k$. Now, since $f(n,k + \ell) \leq f(n,k) + f(n,\ell)$, these together imply that $f(6,k + 20m) \leq f(6,k) + 49m$ for all integers $m \geq 1$. We then solved the integer linear program for all $23 \leq k \leq 42$, finding that $f(n,k) = \ceil*{\frac{49k}{20}}$ in all cases except $k \in \{31, 33, 42\}$. While the first and last of these values are excluded from our result, we were able to resolve the case $k \equiv 13 \pmod{20}$ by finding $f(6,k) = \ceil*{\frac{49k}{20}}$ for $k = 53$.

    Using this finite set of computational results, together with the recursive upper bound, it follows that $f(6,k) \leq \ceil*{\frac{49k}{20}}$ for all $k \geq 65$ with $k \not\equiv 2, 11 \pmod{20}$.
\end{proof}

It is natural to ask what happens when $k \equiv 2, 11 \pmod{20}$. Our computational results were restricted to hyperplanes of weight $1$. However, by Theorem~\ref{thm:main}, we know that all other hyperplanes have weight at most $\frac{5}{6}$. Thus, if we have a cover of $m$ planes containing at least one plane that is not of weight $1$, the total weight of the planes in the cover is at most $m - \frac{1}{6}$.

For $k \equiv 2, 11 \pmod{20}$, we have $\frac{49k}{20} \geq \ceil*{\frac{49k}{20}} -  \frac{1}{10}$. Hence, in light of the previous remark, if there is a cover of size $\ceil*{ \frac{49k}{20}}$, it must consist entirely of weight-$1$ hyperplanes. Our solutions to the integer linear program show that for $k \in \{11, 22, 31, 42, 51, 62\}$, no such cover exists; in all these cases, we have $f(6,k) = \ceil*{\frac{49k}{20}} + 1$.

However, it is not inconceivable that further along these sequences, the Clifton--Huang lower bound is realised; that is, $f(6,2+20m_0) = 5 + 49m_0$ or $f(6,11+20m_0) = 27 + 49m_0$ for some suitably large $m_0$. If this does happen, then using $f(6,20) = 49$, it again follows that we would have equality for all $m \geq m_0$.

Indeed, we also have $\frac{49k}{20} > \ceil*{\frac{49k}{20}} - \frac16$ when $k \equiv 13 \pmod{20}$, which means that any cover of size $\ceil*{\frac{49k}{20}}$ can only contain weight-$1$ hyperplanes. Our solutions to the integer linear program show that there are no such covers for $k = 13$ or $k = 33$, but it does exist for $k = 53$ (and, therefore, for all further terms in this sequence).

\subsection{Improving the lower bound}

In some cases, though, one can show that the lower bound $f(n,k) \geq \ceil*{H_n k}$ is not tight for arbitrarily large $k$. Using numerical data about the hyperplanes in $\mathbb{R}^5$, Clifton and Huang were able to show that $f(5,67+60m) = \ceil*{\frac{137k}{60}} + 1$ for all $m \ge 1$. Using our stability result, we can extend this to all sufficiently large $n$, showing that the lower bound is not tight infinitely often.

\begin{thm} \label{thm:plusone}
Let $k \geq 2$ and $n \in \mathbb{N}$ be such that $0 < \ceil*{kH_n} - kH_n < \frac{1}{n\binom{n-1}{\lfloor n/2 \rfloor}}$. Then $f(n,k) \geq \ceil*{kH_n} + 1$.

In particular, if $n$ is sufficiently large, there are infinitely many choices of $k$ for which $f(n,k) \geq \ceil*{k H_n} + 1$.
\end{thm}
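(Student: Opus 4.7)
The plan is to argue by contradiction: suppose an almost $k$-cover $\mathcal{H}$ has size exactly $\lceil kH_n \rceil$. Double-counting the total weight contributed by the cover and invoking Theorem~\ref{thm:CH} yields
\[
  kH_n \;\le\; \sum_{h \in \mathcal{H}} w(h) \;\le\; |\mathcal{H}| \;=\; \lceil kH_n \rceil,
\]
where the left-hand inequality uses that every nonzero point is covered at least $k$ times while the total weight of the hypercube is $H_n$. Setting $\delta := \lceil kH_n\rceil - kH_n$, the weight deficit $|\mathcal{H}| - \sum_h w(h)$ and the coverage surplus $\sum_h w(h) - kH_n$ both lie in $[0, \delta]$, where by hypothesis $\delta < \tfrac{1}{n\binom{n-1}{\lfloor n/2\rfloor}}$.

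Next I would derive a contradiction in two steps. First, by Theorem~\ref{thm:main}, any hyperplane of weight strictly less than $1$ has weight at most $1 - \tfrac{1}{n}$, contributing at least $\tfrac{1}{n} > \delta$ to the deficit; hence every hyperplane in $\mathcal{H}$ must have weight exactly $1$. Consequently $\sum_h w(h) = \lceil kH_n\rceil$, so the coverage surplus equals $\delta > 0$, and some point $x \in \Qn$ is covered at least $k+1$ times; the surplus is then at least $w(x)$. Using the identity $t\binom{n}{t} = n\binom{n-1}{t-1}$ together with the fact that $\binom{n-1}{\lfloor n/2\rfloor}$ is the largest binomial coefficient of order $n-1$, the minimum point weight is $\tfrac{1}{n\binom{n-1}{\lfloor n/2 \rfloor}}$, which strictly exceeds $\delta$, yielding the desired contradiction.

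For the ``in particular'' clause, write $H_n = p_n/d_n$ in lowest terms. The fractional parts $\{kH_n\}$ cycle through all multiples of $1/d_n$, and any $k$ satisfying $kp_n \equiv -1 \pmod{d_n}$ gives $\lceil kH_n\rceil - kH_n = 1/d_n$. Such $k$ form an infinite arithmetic progression because $\gcd(p_n, d_n) = 1$, so it remains to show $d_n > n\binom{n-1}{\lfloor n/2\rfloor}$ for all sufficiently large $n$.

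The main obstacle is this lower bound on $d_n$, which I would handle as follows. For any prime $p$ with $\sqrt n < p \le n$, the inequality $p^2 > n$ ensures that $p$ divides $L_n := \mathrm{lcm}(1, \ldots, n)$ exactly once, so in the expression $H_n = N_n/L_n$ with $N_n = \sum_{i=1}^n L_n/i$, only the summand $L_n/p$ fails to be divisible by $p$. Hence $p \nmid N_n$, and therefore $p \mid d_n$. Thus $d_n \ge \prod_{\sqrt n < p \le n} p$, which by the prime number theorem grows like $e^{(1 + o(1))n}$. Since $n\binom{n-1}{\lfloor n/2\rfloor} \le n \cdot 2^{n-1}$ and $e > 2$, the ratio $d_n/(n\binom{n-1}{\lfloor n/2 \rfloor})$ tends to infinity, completing the argument.
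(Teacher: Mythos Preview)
Your argument for the first assertion is correct and is essentially the paper's proof, just organised slightly differently: the paper packages the coverage surplus and the weight deficit into a single identity
\[
  |\mathcal{H}| - kH_n \;=\; \sum_S\Bigl(\sum_{h\ni S} x_h - k\Bigr)w(S) \;+\; \sum_h\bigl(1 - w(h)\bigr)x_h,
\]
and then observes that every positive summand on the right is at least $\tfrac{1}{n\binom{n-1}{\lfloor n/2\rfloor}}$, whereas you treat the two pieces sequentially. The content is the same.

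There is, however, a genuine gap in your proof of the ``in particular'' clause. Your claim that for a prime $p$ with $\sqrt{n} < p \le n$ ``only the summand $L_n/p$ fails to be divisible by $p$'' is false whenever $p \le n/2$: every summand $L_n/(jp)$ with $1 \le j \le \lfloor n/p \rfloor$ has $p$-adic valuation zero, not just the one with $j=1$. (For instance, with $n=10$ and $p=5$ one has $L_{10}/5 = 504$ and $L_{10}/10 = 252$, neither divisible by~$5$.) Your argument as written is only valid for primes in $(n/2, n]$, and these contribute merely
\[
  \prod_{n/2 < p \le n} p \;=\; e^{(1/2 + o(1))n},
\]
which is asymptotically smaller than $n\,2^{n-1}$ and hence insufficient. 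The paper sidesteps this by citing Boyd's theorem that $d_n = e^{(1+o(1))n}$; if you want a self-contained argument, you would need to show that for each fixed $m$ and all primes $p \in (n/(m+1), n/m]$ (with $p$ large), the partial harmonic sum $1 + \tfrac12 + \cdots + \tfrac1m \not\equiv 0 \pmod p$, which then lets you push the product of contributing primes up to $e^{(1-o(1))n}$.
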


\begin{proof}
Consider a smallest almost $k$-cover $\mathcal{H}$ of $\Qn$, and for each hyperplane $h$, let $x_h$ denote the number of copies of $h$ in $\mathcal{H}$. We then have
\[ \left| \mathcal{H} \right| = \sum_h x_h = \sum_h \left( \sum_{S: S 
 \in h} w(S) + 1 - \sum_{S: S \in h} w(S) \right) x_h. \]

We obtain
\[ \left| \mathcal{H} \right| = \sum_S \left( \sum_{h: h \ni S} x_h \right) w(S) + \sum_h \left( 1 - \sum_{S: S \in h} w(S) \right) x_h. \]

Now, since $\mathcal{H}$ is an almost $k$-cover, we have $\sum_{h : h \ni S} x_h \geq k$ for all $S$. Since
\[ \sum_S w(S) = \sum_{s = 1}^n \sum_{S: |S| = s} w(S) = \sum_{s=1}^n \frac{\binom{n}{s}}{s \binom{n}{s}} = \sum_{s=1}^n \frac{1}{s} = H_n, \]
we can further rearrange this equation to obtain
\begin{equation}
    \label{eq:H-KHn}
 \left| \mathcal{H} \right| - k H_n = \sum_S \left( \sum_{h : h \ni S} x_h - k \right) w(S) + \sum_h \left( 1 - \sum_{S: S \in h} w(S) \right) x_h. \end{equation}

Consider the terms on the right-hand side of (\ref{eq:H-KHn}). Since $\mathcal{H}$ is an almost $k$-cover, every set is covered at least $k$ times, so $\sum_{h : h \ni S} x_h -k$ is always non-negative integer. Furthermore, by Theorem~\ref{thm:main}, each hyperplane has weight either exactly $1$ or at most $1 - \frac{1}{n}$, which means $1 - \sum_{S:S \in h} w(S)$ is non-negative, and is at least $\frac{1}{n}$ when it is positive. Thus, each individual summand on the right-hand side is non-negative, and the positive summands are at least $\min \left( \min_S w(S), \frac{1}{n} \right) = \frac{1}{n \binom{n-1}{\lfloor n/2 \rfloor }}$.

Now we look at the left-hand side. Since it must be non-negative, we have $\left| \mathcal{H} \right| \geq \ceil*{H_n k}$. Suppose we had equality. By our assumption on $k$ and $n$, $0 < \ceil*{kH_n} - kH_n < \frac{1}{n \binom{n-1}{\lfloor n/2 \rfloor}}$, but we previously established that the right-hand side, if positive, must be at least $\frac{1}{n \binom{n-1}{\lfloor n/2 \rfloor}}$. Hence, we must have $f(n,k) = \left| \mathcal{H} \right| \geq \ceil*{k H_n} + 1$.

\medskip

For the second assertion, let $H_n = \frac{c_n}{d_n}$, where $c_n$ and $d_n$ are coprime. We can then find some $1 \le k_0 \le d_n$ such that $k c_n\equiv -1 \pmod{d_n}$. It then follows that for any $k \equiv k_0 \pmod{d_n}$, we have $\ceil*{kH_n} - kH_n = \frac{1}{d_n}$.

As shown by Boyd~\cite{Boy94}, $d_n = e^{(1+ o(1))n}$. On the other hand, $n \binom{n-1}{\lfloor n/2 \rfloor} \leq n2^{n-1}$. Hence, for $n$ sufficiently large, we have $\ceil*{H_n k} - H_n k < \frac{1}{n \binom{n-1}{\lfloor n/2 \rfloor}}$. Thus, for this infinite sequence of values of $k$, $f(n,k) \ge \ceil*{ H_n k} + 1$.
\end{proof}

\begin{remark}
Note that in the proof of the second statement, it suffices to have $k \equiv k_0 \pmod{d_n}$ where $k_0$ is such that $c k_0 \equiv -r \pmod{d_n}$ for some $1 \le r < \frac{d_n}{n \binom{n-1}{\lfloor n/2 \rfloor}}$. This shows that the linear programming lower bound is not tight for approximately $(e/2)^{(1 + o(1))n}$ of the $e^{(1+ o(1))n}$ residue classes.
\end{remark}

\section{Concluding remarks}
\label{sec:discussion}

In this paper, we have studied the Clifton--Huang weighting of hyperplanes, characterising the hyperplanes of maximum weight, and proving some stability, showing that all other hyperplanes have weight bounded away from $1$. We have then shown how this result can be used to determine further values of the extremal function $f(n,k)$, which is the minimum number of hyperplanes needed to cover all nonzero points of $\Qn$ at least $k$ times, while avoiding the origin completely. Several open problems remain, and we shall highlight some possible directions for further research below.

\paragraph{Maximal hyperplanes} We have shown that the weight of any hyperplane is either $1$ or at most $1 - \frac{1}{n}$. This latter bound is best possible, as evidenced, for example, by the plane $\Pi_c$ defined by $x_1 + x_2 + \hdots + x_{n-1} + cx_n = 1$, where $c \notin \{-(n-1), -(n-2), \hdots, 1\}$. However, as explained in Section~\ref{sec:covers}, when we are setting up the integer linear program for the covering problem, we need only consider hyperplanes that intersect $\{0,1\}^n$ maximally. Since $\Pi_1$ covers all the points that $\Pi_c$ does, and more, the above hyperplane is not maximal.

We could hope for stronger applications, then, if we could show that the weight of a \emph{maximal} hyperplane is either $1$ or much smaller. Unfortunately, there is not much we can gain here --- we can find maximal hyperplanes of weight $1 - \frac{1}{n-1}$. 
For example, it is known that the following hyperplanes, together with permutations of coefficients, are maximal hyperplanes of weight $1 - \frac{1}{n-1}$:
\begin{itemize}
    \item $x_1 + \cdots + x_{n-2} - k x_{n-1} - (n-2-k) x_n = 1$, $k = 1, \dots, \lfloor \frac{n-2}{2} \rfloor$.
    \item $x_1 + \cdots + x_{n-2} + 2 x_{n-1} - k x_n = 1$, $k = 1, \dots, n-3$.
    \item $x_1 + \cdots + x_{n-3} + 2 x_{n-2} - k x_{n-1} - (n-2-k) x_n = 1$, $k = 1, \dots, \lfloor \frac{n-2}{2} \rfloor$.
\end{itemize}
Still, it would be interesting to prove this larger separation for maximal hyperplanes, which could be significant for smaller values of $n$.

\begin{ques}
    Let $h$ be a hyperplane that intersects $\Qn$ maximally. Is it true that either $w(h) = 1$ or $w(h) \le 1 - \frac{1}{n-1}$?
\end{ques}

In our proof, with somewhat more involved arguments, we are able to establish the improved bound in some of our cases, including when we have a coefficient larger than $1$, or when we have a positive fractional coefficient. However, it remains to resolve the problem in the other cases.

Another direction that could help with reducing the search space for a wider range of parameters would be to characterise those hyperplanes whose weight is close to $1$. Initial steps in that direction were taken in Theorem~\ref{thm:furtherstab}, and it would be great to complete the picture.

\begin{ques}
    Can we characterise all hyperplanes $h$ with $w(h) = 1 - O\left(\frac{1}{n}\right)$?
\end{ques}

\paragraph{Improved lower bounds} In this setting, our lower bounds come from the fractional relaxation of the integer linear program, whose value was proven to be $H_n k$, where $H_n$ is the $n$th Harmonic number, by Clifton and Huang~\cite{clifton2020almost}. This immediately gives $f(n,k) \ge \ceil*{H_n k}$, and Clifton and Huang further proved that this lower bound is asymptotically tight.

However, it is not always sharp. As we have shown in Theorem~\ref{thm:plusone}, for every sufficiently large $n$ there are infinitely many choices of $k$ for which we have $f(n,k) \ge \ceil*{H_n k} + 1$. However, our methods do not allow us to prove any larger separation between the fractional and integer solutions, since an almost $k$-cover of size $\ceil*{H_n k} + 2$ could contain a hyperplane of very small weight. That said, in all the computational results we have obtained, we always have $f(n,k) \le \ceil*{H_n k} + 1$. It would be interesting to see how large $f(n,k) - \ceil*{H_n k}$ can be, and to develop methods for proving stronger lower bounds.

\begin{ques}
    Given $n$ and sufficiently large $k$, can we have $f(n,k) \ge \ceil{ H_n k } + 2$?
\end{ques}

It is worth reiterating a question of Clifton and Huang, who asked if the difference between the integer and fractional problems was bounded by an absolute constant.

\begin{ques}[Clifton--Huang~\cite{clifton2020almost}]
Is there an absolute constant $C > 0$, such that for every $n$ there are only finitely many $k$ with $f(n,k) \ge \ceil*{H_n k} + C$?
\end{ques}

\paragraph{Large dimensions} Finally, we note that while this linear programming approach is very fruitful in the cases we have considered, where $n$ is fixed and $k$ is large, Clifton and Huang showed the problem behaves very differently when $k$ is fixed and $n$ is large. The polynomial method has been fruitful in establishing lower bounds in this range, but Sauermann and Wigderson~\cite{SW22} showed that the solution to the algebraic problem is smaller than Clifton and Huang's conjectured value of $f(n,k)$.

\begin{conj}[Clifton--Huang~\cite{clifton2020almost}]
For $k \ge 2$ and $n$ sufficiently large, we have $f(n,k) = n + \binom{k}{2}$.
\end{conj}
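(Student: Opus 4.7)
The plan is to prove the lower bound $f(n,k) \ge n + \binom{k}{2}$, since the matching upper bound is already established by Clifton--Huang~\cite{clifton2020almost}. I would proceed by induction on $k$, with the base cases $k=1,2,3$ already in hand from Alon--F\"uredi~\cite{alon1993covering} and Clifton--Huang. The inductive engine is the telescoping inequality
\[ f(n,k) \ge f(n,k-1) + (k-1) \]
for $n$ sufficiently large relative to $k$, which together with $f(n,1) = n$ yields the conjectured bound.

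For the inductive step, I would take a minimum almost $k$-cover $\mathcal{H}$ of $\Qn$ and attempt to extract a subcollection $\mathcal{H}' \subseteq \mathcal{H}$ of size $k-1$ whose removal leaves an almost $(k-1)$-cover. A natural starting point is to fix a vertex $p$ (say $p = (1, 0, \dots, 0)$), consider the subcollection $\mathcal{H}_p \subseteq \mathcal{H}$ of hyperplanes through $p$, and look for $k-1$ members of $\mathcal{H}_p$ whose removal keeps every other vertex covered at least $k-1$ times. The stability results of this paper, especially Theorem~\ref{thm:main} and Theorem~\ref{thm:furtherstab}, would be essential: since $f(n,k) = H_n k + O(k)$ asymptotically while the total weight of the cover is at most $|\mathcal{H}|$, an optimal cover can afford only a bounded number of hyperplanes whose weight is not exactly $1$, so almost all its hyperplanes must have the rigid structure classified in Theorem~\ref{thm:main}. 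This rigidity is what would let one hope to locate a suitable $\mathcal{H}'$ via a pigeonhole argument on the coordinate directions appearing among $\mathcal{H}_p$.

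The main obstacle is the Sauermann--Wigderson algebraic bound of $n + 2k - 3$~\cite{SW22}, which is known to be tight for arbitrary polynomials vanishing to order $k$ on $\Qn \setminus \{0\}$ but not at the origin. Any proof of the conjecture must therefore exploit the affine-linear structure of the cover, a feature invisible to the usual polynomial-method machinery. Concretely, the hard part is to show that restricting the factors of $P(x) = \prod_{h \in \mathcal{H}}(1 - \ell_h(x))$ to be affine linear, rather than of arbitrary degree, forces a stricter lower bound on $\deg P$. This is precisely the gap that has blocked progress on the conjecture since Clifton and Huang posed it, and overcoming it will likely require either a new polynomial identity that leverages the rank-one structure of each factor, or a purely combinatorial peeling argument as sketched above. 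As a first concrete target, establishing the $k=4$ case using the refined hyperplane enumeration from Corollary~\ref{cor:countweight1} and the coefficient constraints of Theorem~\ref{thm:furtherstab} would already be a meaningful test of this approach, since there the algebraic bound is $n+5$ while the conjectured truth is $n+6$.
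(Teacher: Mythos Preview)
The statement you are addressing is a \emph{conjecture}, listed in the concluding remarks as an open problem; the paper does not claim or contain a proof of it. So there is no ``paper's own proof'' to compare against, and your write-up is, appropriately, a research plan rather than a proof. You yourself acknowledge this when you say the gap ``has blocked progress on the conjecture since Clifton and Huang posed it.''

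That said, the specific strategy you outline has a structural flaw worth naming. You propose to leverage Theorem~\ref{thm:main} and Theorem~\ref{thm:furtherstab} to argue that ``an optimal cover can afford only a bounded number of hyperplanes whose weight is not exactly $1$,'' and then exploit the resulting rigidity. But the weight machinery of this paper lives in the regime where $k$ is large relative to $n$: there the lower bound $\lceil H_n k\rceil$ is close to the size of an optimal cover, so slack in weight is genuinely scarce. The Clifton--Huang conjecture concerns the opposite regime, $n$ large with $k$ fixed, where the conjectured value $n+\binom{k}{2}$ vastly exceeds $H_n k \sim k\ln n$. In that regime the total-weight inequality is slack by roughly $n - k\ln n$, so an optimal cover could in principle consist entirely of hyperplanes of weight far below $1$ without violating any weight constraint. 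The stability theorems therefore give no structural information here, and the pigeonhole step you envision for choosing $\mathcal{H}'$ has no foundation to stand on.

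Your telescoping target $f(n,k)\ge f(n,k-1)+(k-1)$ is exactly the right reformulation, and you correctly identify that any argument must exploit the affine-linear structure in a way the polynomial method cannot. But the tools developed in this paper are calibrated to the wrong asymptotic regime to supply that step.
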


\section*{Acknowledgement}
Shagnik Das is supported by Taiwan NSTC grant~111-2115-M-002-009-MY2. Wei-Hsuan Yu is supported by MOST under Grant No.~109-2628-M-008-002-MY4. Valjakas Djaljapayan and Yen-chi Roger Lin are partially supported by MOST under Grant No.~111-2115-M-003-009.

\bibliographystyle{plain}
\bibliography{almost-k}
\newpage

\appendix

\section{An almost $20$-cover of $Q^6$} \label{app:20cover}
We used an integer linear program solver to find an almost $20$-cover of $Q^6$ that consists of 49 hyperplanes.
Each row of Table~\ref{tbl:f62049} lists the coefficients $c_i$ that give the hyperplane of the form $\sum_{i=1}^6 c_i x_i = 1$.

\begin{table}[h!]
  \centering
  \caption{The coefficients for hyperplanes $\vec{c}\cdot\vec{x}=1$ in an almost 20-cover in $Q_*^6$, where $\vec{c} = (c_1, c_2, c_3, c_4, c_5, c_6)$}
  \label{tbl:f62049}
  \begin{tabular}[hb]{rrrrrr}
  $c_1$ & $c_2$ & $c_3$ & $c_4$ & $c_5$ & $c_6$ \\ \hline
1 & 1 & 1 & 0 & $-1$ & $-1$ \\
1 & 1 & 1 & 0 & $-2$ & 0 \\
1 & 1 & 1 & $-1$ & $-1$ & 0 \\
1 & 1 & 0 & 1 & $-1$ & 0 \\
1 & 1 & 0 & $-2$ & 1 & 1 \\
1 & 1 & $-1$ & 1 & 1 & $-1$ \\
1 & 1 & $-1$ & 1 & 0 & $-1$ \\
1 & 1 & $-1$ & 0 & 0 & 1 \\
1 & 1 & $-2$ & 1 & 0 & 0 \\
1 & 0 & 1 & 0 & 1 & $-1$ \\
1 & 0 & 1 & 0 & 0 & 0 \\
1 & 0 & 1 & 0 & 0 & $-1$ \\
1 & 0 & 1 & $-1$ & $-1$ & 1 \\
1 & 0 & 0 & 1 & 1 & 1 \\
1 & 0 & 0 & 1 & $-1$ & 0 \\
1 & 0 & 0 & 0 & 1 & 0 \\
1 & 0 & 0 & 0 & 0 & 1 \\
1 & 0 & $-1$ & 1 & 1 & 0 \\
1 & 0 & $-1$ & 1 & 0 & 1 \\
1 & $-2$ & 0 & 0 & 1 & 1 \\
0 & 1 & 1 & 1 & 1 & 0 \\
0 & 1 & 1 & 1 & 0 & $-1$ \\
0 & 1 & 1 & 0 & 0 & 1 \\
0 & 1 & 1 & 0 & 0 & 0 \\
0 & 1 & 0 & 1 & $-2$ & 1
  \end{tabular} 
  \hspace{3ex}
    \begin{tabular}[ht]{rrrrrr}
  $c_1$ & $c_2$ & $c_3$ & $c_4$ & $c_5$ & $c_6$ \\ \hline
0 & 1 & 0 & 0 & 1 & 0 \\
0 & 1 & 0 & 0 & 1 & 0 \\
0 & 1 & 0 & 0 & 0 & 1 \\
0 & 1 & 0 & 0 & 0 & 0 \\
0 & 1 & $-1$ & 1 & 1 & 1 \\
0 & 0 & 1 & 1 & 1 & $-1$ \\
0 & 0 & 1 & 1 & 0 & 0 \\
0 & 0 & 1 & 1 & 0 & 0 \\
0 & 0 & 1 & 0 & 1 & 0 \\
0 & 0 & 1 & 0 & 0 & 1 \\
0 & 0 & 1 & 0 & 0 & 1 \\
0 & 0 & 1 & 0 & 0 & 0 \\
0 & 0 & 1 & $-1$ & 1 & 0 \\
0 & 0 & 0 & 1 & 1 & 0 \\
0 & 0 & 0 & 1 & 0 & 1 \\
0 & 0 & 0 & 1 & 0 & 0 \\
0 & 0 & 0 & 0 & 1 & 1 \\
0 & 0 & 0 & 0 & 1 & 0 \\
0 & 0 & 0 & 0 & 1 & 0 \\
0 & 0 & 0 & 0 & 0 & 1 \\
0 & 0 & $-1$ & 1 & 0 & 1 \\
0 & 0 & $-1$ & 0 & 1 & 1 \\
0 & $-1$ & 1 & 0 & 1 & 1 \\
$-1$ & 1 & 0 & 1 & $-1$ & 1
  \end{tabular}
\end{table}
\end{document}